\newtheorem{theorem}{Theorem}[section]
\newtheorem{lemma}[theorem]{Lemma}
\newtheorem{proposition}[theorem]{Proposition}
\newtheorem{corollary}[theorem]{Corollary}
\newtheorem*{theoremA}{Theorem A}
\newtheorem*{theoremB}{Theorem B}
\theoremstyle{remark}
\newtheorem{remark}[theorem]{Remark}
\newcommand{\C}{\ensuremath{\mathbb{C}}}
\newcommand{\R}{\ensuremath{\mathbb{R}}}
\newcommand{\Pa}{\ensuremath{\mathcal{P}}}
\newcommand{\g}[1]{\ensuremath{\mathfrak{#1}}}
\newcommand{\II}{\ensuremath{I\! I}}
\DeclareMathOperator{\tr}{tr}
\begin{document}
\title[Isoparametric submanifolds]{Isoparametric submanifolds in\\ two-dimensional complex space forms}

\author[J.\ C.\ D\'{\i}az-Ramos]{Jos\'{e} Carlos D\'{\i}az-Ramos}
\author[M.\ Dom\'{\i}nguez-V\'{a}zquez]{Miguel Dom\'{\i}nguez-V\'{a}zquez}
\author[C.\ Vidal-Casti\~{n}eira]{Cristina Vidal-Casti\~{n}eira}

\address{Department of Geometry and Topology, Universidade de Santiago de Compostela, Spain}
\email{josecarlos.diaz@usc.es}

\address{ICMAT - Instituto de Ciencias Matem\'aticas (CSIC-UAM-UC3M-UCM), Madrid, Spain.}
\email{miguel.dominguez@icmat.es}

\address{Department of Geometry and Topology, Universidade de Santiago de Compostela, Spain}
\email{cristina.vidal@usc.es}

\thanks{The second author has been supported by a Juan de la Cierva-formaci\'on fellowship (Spain) and by the
ICMAT Severo Ochoa project SEV-2015-0554 (MINECO, Spain). All authors have been supported by
projects EM2014/009, GRC2013-045 and MTM2013-41335-P with FEDER funds (Spain).}

\begin{abstract}
We show that an isoparametric submanifold of a complex hyperbolic plane, according to the definition of Heintze, Liu and Olmos', is an open part of a principal orbit of a polar action.

We also show that there exists a non-isoparametric submanifold of the complex hyperbolic plane that is isoparametric according to the definition of Terng's. Finally, we classify Terng-isoparametric submanifolds of two-dimensional complex space forms.
\end{abstract}


\subjclass[2010]{53C40, 53C12, 53C35}

\keywords{Complex hyperbolic plane, complex projective plane, isoparametric submanifold, polar action, principal curvatures}

\maketitle

\section{Introduction}

A submanifold $M$ of a Riemannian manifold $\bar{M}$ is said to be isoparametric according to Heintze, Liu and Olmos~\cite{HLO06}, henceforth simply \emph{isoparametric}, if its normal bundle $\nu M$ is flat, all nearby parallel submanifolds have constant mean curvature in the radial directions, and for any $p\in M$ there exists a totally geodesic submanifold $\Sigma_p$ through $p$ such that $T_p\Sigma_p=\nu_p M$.

We denote by $\bar{M}^2(c)$ a 2-dimensional complex space form of constant holomorphic sectional curvature $c\neq 0$. Thus, $\bar{M}^2(c)$ is a complex projective plane $\C P^2$ if $c>0$, or a complex hyperbolic plane $\C H^2$ if $c<0$. The first main result of this paper is:

\begin{theoremA}
An isoparametric submanifold of $\bar{M}^2(c)$ is congruent to an open part of a principal orbit of a polar action on $\bar{M}^2(c)$.
\end{theoremA}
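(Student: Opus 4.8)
The plan is to stratify the argument by the codimension of $M$ in $\bar{M}^2(c)$, exploiting the third defining condition, which forces a totally geodesic submanifold tangent to each normal space. Since $\bar{M}^2(c)$ is a rank-one symmetric space, its totally geodesic submanifolds have dimension $0$, $1$, $2$ or $4$ only: points, geodesics, the totally real $\R H^2$ or $\R P^2$, the complex $\C H^1$ or $\C P^1$, and the whole space; there is no three-dimensional totally geodesic submanifold. As $\dim\Sigma_p=\dim\nu_p M=4-\dim M$, the third condition is unsatisfiable when $\dim M=1$, so one-dimensional isoparametric submanifolds cannot occur. The cases $\dim M\in\{0,4\}$ being trivial, I would be left with hypersurfaces ($\dim M=3$) and surfaces ($\dim M=2$).

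For a hypersurface the normal bundle has rank one and is automatically flat, and the third condition holds automatically because the normal geodesics are totally geodesic. The isoparametric condition thus reduces to the classical one: every nearby parallel hypersurface has constant mean curvature. First I would use the Riccati (tube) equations governing the evolution of the shape operator along the normal geodesics, together with a Vandermonde-type argument on the elementary symmetric functions of the principal curvatures, to deduce that constancy of the mean curvature of \emph{all} parallel hypersurfaces forces each principal curvature to be constant. I would then invoke the classification of hypersurfaces with constant principal curvatures in $\C P^2$ and $\C H^2$: in these two spaces every such hypersurface is an open part of a homogeneous one, i.e.\ an orbit of a cohomogeneity-one action. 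As cohomogeneity-one actions are automatically polar, $M$ is an open part of a principal orbit of a polar action.

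The surface case is the heart of the matter. Here $\dim\nu_p M=2$, and the third condition gives a totally geodesic surface $\Sigma_p$ with $T_p\Sigma_p=\nu_p M$; by the list above $\Sigma_p$ is either complex or totally real, so $\nu_p M$ is at every point either $J$-invariant or Lagrangian, and by connectedness the type is constant along $M$. If $\nu_p M$ is $J$-invariant then so is $T_p M$ and $M$ is a complex curve; if $\nu_p M$ is Lagrangian then $J\nu_p M=T_p M$ and $M$ is Lagrangian. In the Lagrangian case the isometry $J\colon TM\to\nu M$ intertwines the Levi-Civita connection with the normal connection, so flatness of $\nu M$ is equivalent to flatness of the induced metric; I would combine this with the constant mean curvature of the parallel surfaces, expressed through the normal-exponential map and the Gauss and Ricci equations, to determine the second fundamental form, obtaining flat Lagrangian surfaces that are principal orbits of the cohomogeneity-two polar actions on $\bar{M}^2(c)$. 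In the complex case I would feed the complex structure of $\nu M$ into the Ricci equation; a direct computation shows that the ambient contribution $(\bar R(X,JX)\xi)^\perp$ to the normal curvature is nonzero (so the totally geodesic $\C P^1$ or $\C H^1$ is \emph{not} admissible), and flatness of $\nu M$ then imposes a rigid relation on the shape operators which, together with the minimality of complex submanifolds and the constant mean curvature condition, either excludes the case or identifies $M$ with a specific homogeneous complex orbit.

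I expect the main obstacle to be precisely this surface analysis: translating the three defining conditions into explicit constraints on the second fundamental form, and then matching the outcome, up to congruence, against the known list of cohomogeneity-two polar actions on $\C P^2$ and $\C H^2$. The delicate point is to rule out any inhomogeneous surface satisfying all three conditions, so that every isoparametric surface is genuinely an open part of a principal orbit; the restriction to dimension two is what makes this tractable, since the relevant root spaces are too small to accommodate the exotic examples present in higher-dimensional complex space forms.
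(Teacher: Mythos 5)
Your overall architecture is the paper's: use the section to split into a complex case, a hypersurface case and a Lagrangian surface case (your explicit remark that codimension $3$ is impossible because $\bar M^2(c)$ has no $3$-dimensional totally geodesic submanifold is implicit in the paper's case analysis). The genuine gap is in the hypersurface branch. You propose to deduce, via Riccati equations and a Vandermonde-type argument on symmetric functions, that constancy of the radial mean curvature of all parallel hypersurfaces forces each principal curvature to be constant, and then to quote the classification of hypersurfaces with constant principal curvatures (\cite{W83}, \cite{BD07}). In a complex space form this deduction is false as a general principle: the normal Jacobi operator $\bar R(\,\cdot\,,\gamma')\gamma'$ has eigenvalues $c$ and $c/4$ with eigenspaces determined by $J\gamma'$, not by the shape operator, so the radial mean curvature of a parallel hypersurface depends on the Hopf-angle data $\langle J\xi,U_i\rangle$ as well as on the $\lambda_i$, and the Vandermonde separation breaks down. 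Concretely, in $\C H^n$ with $n\geq 3$ there exist isoparametric hypersurfaces (tubes around certain ruled minimal homogeneous submanifolds) whose principal curvatures are \emph{not} constant, so no argument of the kind you sketch can exist. The implication you need is true in $\bar M^2(c)$, but only as a corollary of the full classifications of isoparametric hypersurfaces, \cite{Do16} for $\C P^2$ and \cite[Corollary~1.2]{DDS} for $\C H^2$; citing these is exactly what the paper does at this point, and it is not a step one can replace by an elementary computation.

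The other two branches are also left incomplete, though the ideas are sound. In the complex case, the Ricci equation for a complex curve gives $\langle R^\perp(X,JX)\xi,J\xi\rangle=-\tfrac{c}{2}-2\lvert S_\xi X\rvert^2$; this is nonzero when $c>0$, but when $c<0$ it leaves open the possibility $S_\xi^2=-\tfrac{c}{4}\,\mathrm{id}$, and your hedge (``either excludes the case or identifies $M$ with a specific homogeneous complex orbit'') is not a proof. The paper closes this case in one stroke with the normal holonomy theorem of Alekseevsky and Di Scala \cite[Theorem~19]{ADS04}. In the Lagrangian case your plan coincides with the paper's (flatness of $M$ from the fact that $J$ intertwines the tangential and normal connections, then exploitation of the mean curvature conditions), but the endgame --- that a flat Lagrangian surface of $\bar M^2(c)$ with parallel mean curvature is an open part of a principal orbit of a cohomogeneity-two polar action --- is a substantial theorem that the paper imports from \cite[Theorem~2.1]{DDV}; you correctly identify this as the main obstacle but neither prove it nor cite a source for it. (The paper's own technical contribution here, a Jacobi-field Taylor expansion showing that isoparametric implies Terng-isoparametric, is what supplies the parallel mean curvature hypothesis needed to apply \cite{DDV}.) As written, each of the three nontrivial branches of your argument rests on a statement that is unproved, and in the hypersurface branch the proposed method cannot be made to work.
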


The classification of isoparametric submanifolds for complex projective spaces $\C P^n$, $n\neq 15$, has been obtained in much greater generality using a different method in~\cite{Do16}. Here we deal with both cases simultaneously and obtain the result for $\C H^2$. We prove this theorem in Section~\ref{sec:theoremA}.

Recall that an isometric action of a Lie group on a Riemannian manifold is called \emph{polar} if there exists a submanifold $\Sigma$ (called section) that intersects all the orbits of the action, and such that $\Sigma$ is orthogonal to the orbits at intersection points. Polar actions on complex projective spaces have been classified in~\cite{PT99}, and polar actions on the complex hyperbolic plane have been classified in~\cite{BD11a}. See also~\cite{DDK} for the more general classification in~$\C H^n$. Therefore, our result implies the classification of isoparametric submanifolds in $\bar{M}^2(c)$ of arbitrary codimension. The classification for $\C H^2$ seems to be the first one of these characteristics in a symmetric space of noncompact type and nonconstant curvature.

A submanifold $M$ of a Riemannian manifold $\bar{M}$ is called \emph{Terng-isoparametric} if it has flat normal bundle and the eigenvalues of the shape operator with respect to any parallel normal vector field are constant. In our setting, Terng's definition is less rigid than Heintze, Liu and Olmos', and thus, a new example appears in codimension two:

\begin{theoremB}
A submanifold of $\bar{M}^2(c)$ is Terng-isoparametric if and only if it is congruent to an open part of:
\begin{enumerate}[{\rm (i)}]
\item an isoparametric submanifold of $\bar{M}^2(c)$, or
\item a Chen's surface in $\C H^2$, or
\item a circle in $\bar{M}^2(c)$.
\end{enumerate}
\end{theoremB}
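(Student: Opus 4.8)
The plan is to argue the forward direction by classifying Terng-isoparametric submanifolds $M$ according to their codimension in the four-dimensional space $\bar{M}^2(c)$, and then to match each surviving family with one of (i)--(iii); the converse is then a verification. Since $\dim_{\R}\bar{M}^2(c)=4$, the codimension is $0,1,2,3$, or $4$. Codimensions $0$ and $4$ are trivial (an open subset, respectively a point) and are degenerate instances of (i). In codimension one the normal space is a line, so a geodesic furnishes a totally geodesic section $\Sigma_p$ with $T_p\Sigma_p=\nu_p M$; hence a Terng-isoparametric hypersurface, being a hypersurface with constant principal curvatures, automatically satisfies the section condition of Heintze, Liu and Olmos and is isoparametric, landing in (i) (alternatively one cites the known classification of isoparametric hypersurfaces of $\bar{M}^2(c)$).

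Next I would treat curves (codimension three). Over a one-dimensional base there are no nonzero $2$-forms, so the normal bundle of a curve is automatically flat and global parallel normal frames exist. Writing $T$ for the unit tangent and choosing a parallel orthonormal normal frame $\{\xi_1,\xi_2,\xi_3\}$, the shape operator $A_{\xi_i}$ is the scalar $\lambda_i=\langle\II(T,T),\xi_i\rangle$; since a parallel normal field has constant coefficients in this frame, the Terng condition is equivalent to all $\lambda_i$ being constant. Using $\bar\nabla_T T=\sum_i\lambda_i\xi_i$ and $\bar\nabla_T\xi_i=-\lambda_i T$ and differentiating gives $\bar\nabla_T\bar\nabla_T T=-(\sum_i\lambda_i^2)\,T$ with constant coefficient, so $M$ is a circle (a geodesic when the curvature vanishes), which is (iii). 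Since $\bar{M}^2(c)$ admits no three-dimensional totally geodesic submanifold, these curves can never meet the section condition, which is exactly why they are new relative to (i).

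The heart of the proof is codimension two, where both (i) and the new family (ii) occur, and I expect this to be the main obstacle. Here I would introduce the K\"ahler angle $\theta$ and an adapted orthonormal frame $\{e_1,e_2\}$ of $TM$ and $\{e_3,e_4\}$ of $\nu M$ with
\[
Je_1=\cos\theta\,e_2+\sin\theta\,e_3,\qquad Je_2=-\cos\theta\,e_1+\sin\theta\,e_4,
\]
so that $\theta=0$ detects a complex surface and $\theta=\pi/2$ a Lagrangian one. Feeding this frame into the curvature tensor of $\bar{M}^2(c)$, the Ricci equation for a flat normal bundle reduces (up to sign conventions) to
\[
\langle[A_{e_3},A_{e_4}]e_1,e_2\rangle=\frac{c}{4}\,(3\cos^2\theta-1),
\]
so the complex structure forces a definite amount of noncommutativity of the shape operators, governed entirely by $\theta$. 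The difficulty is precisely that flat normality does \emph{not} make $A_{e_3}$ and $A_{e_4}$ commute, so one cannot simultaneously diagonalize them and must track the rotation between the K\"ahler-adapted frame and a parallel normal frame. I would then bring in the Gauss and Codazzi equations together with the Terng condition, expressed as the constancy on $M$ of $\tr(aA_{e_3}+bA_{e_4})$ and $\det(aA_{e_3}+bA_{e_4})$ for all $a,b$ in a parallel normal frame; the Codazzi equations couple the derivatives of the rotation angle to $\theta$, $\theta'$ and the shape-operator entries. Disentangling this coupled system to prove that $\theta$ is constant and can take only finitely many values is the delicate step. When $\theta\in\{0,\pi/2\}$ the normal space is $J$-invariant or totally real, a complex line or a totally real plane supplies the totally geodesic section, and the surface is isoparametric, giving (i); when $\theta$ equals a specific intermediate constant, which the computation shows survives only for $c<0$, one recovers exactly Chen's surface in $\C H^2$, giving (ii).

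Finally, for the converse I would verify that each family is Terng-isoparametric. For (i) this follows from Theorem~A: a principal orbit of a polar action has flat normal bundle and constant principal curvatures with respect to the equivariant parallel normal fields. For (ii) and (iii) it is a direct computation, which at the same time confirms, through the nonexistence of a totally geodesic submanifold tangent to $\nu_p M$, that the Chen surface and the circle genuinely fail the section condition and so are not isoparametric. This closes the equivalence.
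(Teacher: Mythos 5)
Your overall case division (by codimension) is compatible with the paper's (which organizes the cases by whether the normal bundle is complex, totally real, or neither), and your treatment of curves and of the converse direction is sound. But the heart of Theorem~B --- the codimension-two case where the normal bundle is neither complex nor totally real --- is only announced in your proposal, not proved. You set up the K\"ahler-angle frame and the Ricci identity correctly, and you correctly isolate the difficulty (showing that the slant data are constant and can take only special values), but you then defer precisely that step, and it is not a routine verification: it occupies the entire second half of the paper's proof. Concretely, the paper takes a parallel normal frame $\{\xi,\eta\}$ with $\eta$ orthogonal to the mean curvature vector and an eigenframe $U_1,U_2$ of $S_\xi$, writes $J\xi=b_1U_1+b_2U_2+a\eta$ and $S_\eta=\bigl(\begin{smallmatrix}\mu & \ast\\ \ast & -\mu\end{smallmatrix}\bigr)$ with the off-diagonal entry $\ast$ determined by the Ricci equation; the Codazzi equation yields first-order equations for $a,b_1,b_2,\mu$, the Terng condition makes $\tr S_\eta^2$ constant, and eliminating $\mu$ between these produces a rational identity that forces $1-3a^2$ to be constant; a separate lemma then makes $b_1,b_2,\mu$ constant, and the surviving equations force $b_2=0$, $\mu=0$, $\lambda_1=3\lambda_2$ and $c+12\lambda_2^2=0$. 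That last equation is exactly what kills the case $c>0$ and pins down the shape operators when $c<0$, after which \cite[Theorem~5.1(vi)]{CT00} identifies Chen's surface. Without carrying out this elimination you have not excluded slant Terng-isoparametric surfaces with nonconstant K\"ahler angle, nor shown uniqueness of the exceptional example, nor shown that $\C P^2$ admits none --- which is the substance of the theorem.

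Two further points. In codimension one your claim is wrong as stated: constant principal curvatures plus the (automatic) existence of geodesic sections does \emph{not} make a hypersurface isoparametric in the sense of Heintze--Liu--Olmos, because that definition also requires the parallel hypersurfaces to have constant mean curvature in radial directions, and in a nonconstant-curvature ambient space the radial mean curvature of a parallel hypersurface involves the Hopf-angle functions $\langle U_i,J\xi\rangle$ and not just the principal curvatures (this is visible in the paper's own Jacobi-field expansion). What rescues this case is the classification of hypersurfaces with \emph{constant principal curvatures} (\cite{W83} for $\C P^2$, \cite{BD07} for $\C H^2$): such hypersurfaces are open parts of homogeneous ones, hence of orbits of cohomogeneity one actions, which are polar; note that this is a different classification from the one of isoparametric hypersurfaces that your parenthetical falls back on. Finally, your $\theta=0$ subcase contains no examples to place in (i): by \cite[Theorem~19]{ADS04}, a complex submanifold of $\bar{M}^2(c)$ with flat normal bundle is a point or an open subset of $\bar{M}^2(c)$, so that subcase is vacuous rather than genuinely isoparametric.
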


The proof of Theorem~B is given in Section~\ref{sec:theoremB}. Apart from circles, which are trivial examples of Terng-isoparametric submanifolds, we do not get new examples in complex projective planes. However, there exists a Terng-isoparametric submanifold in $\C H^2$ that is neither a circle nor a principal orbit of a polar action. We have called this new example  \emph{Chen's surface}, which is homogeneous, that is, an orbit of an isometric action on the ambient space, and unique up to isometric congruence (see~\S\ref{sec:Chen}). It was introduced by Chen in~\cite{C98}, and a geometric characterization was given in~\cite{CT00}. In Section~\ref{sec:Chen} we present a new Lie  theoretic description of this submanifold in terms of the root space decomposition of the Lie algebra of the isometry group of $\C H^2$.\medskip

The motivation for this paper comes from the study of isoparametric submanifolds in symmetric spaces. The history of isoparametric submanifolds can be traced back at least to the works of Somigliana~\cite{So19} and Segre~\cite{Se38} who classified isoparametric hypersurfaces in Euclidean spaces.
Thorbergsson showed in~\cite{Th91} that compact, full and irreducible isoparametric submanifolds of codimension greater than~$2$ in Euclidean spaces are homogeneous, which implies that such submanifolds are principal orbits of polar actions, which in turn correspond to isotropy representations of symmetric spaces~\cite{Da85}.

Thorbergsson's remarkable result~\cite{Th91} readily implies the classification of isoparametric submanifolds of codimension $\geq 2$ in spheres. However, the classification of isoparametric hypersurfaces in spheres is open and still an active topic of research. See~\cite{Th10} for a recent survey on this and other related topics.

Isoparametric hypersurfaces in real hyperbolic spaces were classified by Cartan~\cite{Ca38}, whereas for higher codimension, Wu~\cite{Wu92} reduced the classification problem to that of isoparametric hypersurfaces in spheres. We highlight that, in real space forms, homogeneous isoparametric submanifolds are always principal orbits of polar actions.

The general study of isoparametric submanifolds was started by Terng~\cite{T85}, whose definition was given for spaces of constant curvature. Nowadays the general definition of isoparametric submanifold is credited to Heintze, Liu and Olmos~\cite{HLO06}. This is the notion that we use in this paper, although we also consider Terng's definition, which turns out to be less rigid when the ambient space is a complex hyperbolic plane.
This contrasts with the situation in real space forms, where both definitions agree.

Isoparametric submanifolds of complex projective spaces $\C P^n$ have been studied by the second author, who gave a classification if $n\neq 15$.
In this paper we also study Terng-isoparametric submanifolds of $\C P^2$ and conclude that no new interesting examples arise.

The classification of isoparametric hypersurfaces in complex hyperbolic spaces has recently been obtained in~\cite{DDS}.
For higher codimension the problem seems to be much more complicated. We restrict to $\C H^2$ in this paper and show that all examples are open parts of principal orbits of polar actions on $\C H^2$. Surprisingly, unlike in real space forms, there is a Terng-isoparametric submanifold of codimension~$2$ that is not isoparametric; this submanifold is homogeneous but not an orbit of a polar action.


\section{Preliminaries}\label{sec:preliminaries}

We start with some basic definitions and notations.

\subsection{Submanifold geometry}

We denote by $\bar{M}^2(c)$ a complex space form of dimension $2$ and constant holomorphic curvature $c\neq 0$. Thus, $\bar{M}^2(c)$ is isometric to a complex projective plane $\C P^2$ endowed with the Fubini-Study metric of constant holomorphic sectional curvature $c>0$, or to a complex hyperbolic plane $\C H^2$ endowed with the Bergman metric of constant holomorphic sectional curvature $c<0$. We denote by $\langle\,\cdot\,,\,\cdot\,\rangle$ the Riemannian metric of $\bar{M}^2(c)$ and by $\bar{\nabla}$, $\bar{R}$ and $J$ its Levi-Civita connection, its curvature tensor and its complex structure, respectively. Thus,
\[
\bar{R}(X,Y)Z=\frac{c}{4}(\langle Y,Z\rangle X-\langle X,Z\rangle Y
+\langle JY,Z\rangle JX-\langle JX,Z\rangle JY-2\langle JX,Y\rangle JZ),
\]
for vector fields $X$, $Y$, $Z\in\Gamma(\bar{M}^2(c))$.

Now let $M$ be a submanifold of $\bar{M}^2(c)$. We denote its normal bundle by $\nu M$, and by $\nabla$ and $R$ its Levi-Civita connection and its curvature tensor, respectively. The extrinsic geometry of $M$ is determined by its second fundamental form $\II$, which is defined by the formula $\bar{\nabla}_X Y=\nabla_X Y+\II(X,Y)$, for $X$, $Y\in\Gamma(TM)$. If $\xi\in\Gamma(\nu M)$ is a normal vector, then the shape operator $S_\xi$ with respect to $\xi$ is defined by $\langle S_\xi X,Y\rangle=\langle\II(X,Y),\xi\rangle$. We also denote by $\nabla^\perp$ the normal connection of the normal bundle $\nu M$, which is related to the shape operator via the Weingarten formula $\bar{\nabla}_X\xi=-S_\xi X+\nabla^\perp_X\xi$.

To a large extent, the geometry of $M$ is governed by the Gauss, Codazzi and Ricci equations, that can be written as
\begin{align*}
\langle\bar{R}(X,Y)Z,W\rangle={}
&\langle{R}(X,Y)Z,W\rangle-\langle\II(Y,Z),\II(X,W)\rangle+\langle\II(X,Z),\II(Y,W)\rangle,\\
\langle\bar{R}(X,Y)Z,\xi\rangle={}
&\langle\nabla_X S_\xi Y,Z\rangle-\langle \nabla_X Y, S_\xi Z\rangle
-\langle S_{\nabla^\perp_X\xi} Y,Z\rangle\\
&{}-\langle\nabla_Y S_\xi X,Z\rangle+\langle \nabla_Y X, S_\xi Z\rangle
+\langle S_{\nabla^\perp_Y\xi} X,Z\rangle,\\
\langle\bar{R}(X,Y)\xi,\eta\rangle={}
&\langle R^\perp(X,Y)\xi,\eta\rangle-\langle[S_\xi,S_\eta]X,Y\rangle,
\end{align*}
where $X$, $Y$, $Z$, $W\in\Gamma(TM)$, $\xi$, $\eta\in\Gamma(\nu M)$ and $R^\perp$ denotes the curvature tensor of $\nabla^\perp$.

We say that $M$ has {flat normal bundle} if $R^\perp=0$. This is equivalent to requiring that each point has a neighborhood where there is an orthonormal frame of $\nu M$ consisting of $\nabla^\perp$-parallel normal vector fields.

It is easy to see that the shape operator with respect to a unit normal vector field $\xi$ is self-adjoint, that is, $\langle S_\xi X,Y\rangle=\langle S_\xi Y,X\rangle$ for all $X$, $Y\in TM$. Hence, each $S_\xi$ is diagonalizable with real eigenvalues and orthogonal eigenspaces. These eigenvalues are called the {principal curvatures} in the direction of $\xi$. The {mean curvature} in the direction of $\xi$ is $\frac{1}{k}\tr S_\xi$, while the {mean curvature vector} of $M$ is defined by $H=\frac{1}{k}\sum_{i=1}^k\II(E_i,E_i)=\frac{1}{k}\sum_{i=1}^l(\tr S_{\xi_i})\xi_i$, where $\{E_1,\dots,E_k\}$ and $\{\xi_1,\dots,\xi_l\}$ are orthonormal frames of $TM$ and of $\nu M$, respectively.

A submanifold $M$ is called totally umbilical if $\II(X,Y)=\langle X,Y\rangle H$ for any $X$, $Y\in TM$ and totally geodesic if $\II=0$. Totally geodesic submanifolds of $\bar{M}^2(c)$ can be geodesics, real projective or hyperbolic planes $\R P^2$ or $\R H^2$, and complex projective or hyperbolic lines $\C P^1$ or $\C H^1$, depending on the sign of the holomorphic curvature~$c$.

\subsection{Isoparametric submanifolds}

Let $\bar{M}$ be a Riemannian manifold and $M$ a submanifold of $\bar{M}$.
The submanifold $M$ is said to be \emph{almost isoparametric}~\cite{HLO06} if its normal bundle $\nu M$ is flat and if, locally, the parallel submanifolds  of $M$ have constant mean curvature in radial directions.

The submanifold $M$ is said to admit sections if for any point $p\in M$ there is a totally geodesic submanifold $\Sigma_p$, called the \emph{section} through $p$, such that $T_p \Sigma_p=\nu_p M$. Then, we say $M$ is \emph{isoparametric}
if it is almost isoparametric and admits sections. Throughout this paper whenever we consider an isoparametric submanifold, we understand that it is isoparametric according to this definition.

The submanifold $M$ is said to have \emph{constant principal curvatures} if for any curve $\sigma\colon I\to M$ and any parallel unit normal vector field $\xi\in\Gamma(\sigma^*\nu^1 M)$ along $\sigma$ the eigenvalues of the shape operator $S_{\xi(t)}$ with respect to $\xi(t)$ are constant along $\sigma$. Then, $M$ is called \emph{Terng-isoparametric} (or isoparametric according to Terng~\cite{T85}) if it has constant principal curvatures and flat normal bundle.

\section{Chen's surface}\label{sec:Chen}

In this section we give a Lie theoretic description of the surface that arises in Theorem~B~(ii). This surface was introduced by Chen in~\cite{C98}.

First we recall the characterizing properties of this surface according to~\cite{C98}. A surface $M$ in $\C H^2$ is called slant if its tangent space has constant K\"{a}hler angle (called Wirtinger angle or slant angle in~\cite{C98}), that is, if for each nonzero vector $v\in T_p M$ the angle between $Jv$ and $T_p M$ is independent of $p\in M$ and $v\in T_p M$. Such surface is called proper slant if it is neither complex nor totally real, that is, if the K\"{a}hler angle is neither $0$ nor $\pi/2$. The Chen's surface that appears in Theorem B~(ii) is, according to~\cite[Theorem~A]{C98} and~\cite[Theorem~5.1]{CT00}, the unique (up to isometric congruence) proper slant surface of $\C H^2$ with K\"{a}hler angle $\theta=\arccos(1/3)$, and satisfying $\langle H,H\rangle=2K-c(1+3\cos^2\theta)/2$, where $K$ is the Gaussian curvature of $M$.

Chen's surface turns out to be homogeneous, although not an orbit of a polar action, and the aim of this section is to give a subgroup of the isometry group of $\C H^2$ one of whose orbits is precisely the Chen's surface. For that matter let $G=SU(1,2)$ and $K=S(U(1)U(2))\subset G$, and write $\C H^2=G/K$. Then $K$ is the isotropy group of $G$ at the origin $o=1K$. We denote by $\g{g}$ and~$\g{k}$ the Lie algebras of $G$ and $K$ respectively. We have the Cartan decomposition with respect to $o$, $\g{g}=\g{k}\oplus\g{p}$, where $\g{p}$ is the orthogonal complement of $\g{k}$ in $\g{g}$ with respect to the Killing form of $\g{g}$. Let $\g{a}$ be a maximal abelian subspace of $\g{p}$, which is known to be $1$-dimensional. For a covector $\lambda\in\g{a}^*$ we define $\g{g}_\lambda=\{X\in\g{g}:[H,X]=\lambda(H)X, \forall H\in\g{a}\}$. Then, one can write $\g{g}=\g{g}_{-2\alpha}\oplus\g{g}_{-\alpha}\oplus
\g{g}_0\oplus\g{g}_\alpha\oplus\g{g}_{2\alpha}$, the so-called root space decomposition of $\g{g}$ with respect to~$o$ and~$\g{a}$. It is known that $\g{g}_0=\g{k}_0\oplus\g{a}$, where $\g{k}_0=\g{g}_0\cap\g{k}$, and that $\g{g}_{2\alpha}$ is $1$-dimensional. We determine an ordering in $\g{a}^*$ so that $\alpha$ is positive, and define the nilpotent subalgebra $\g{n}=\g{g}_\alpha\oplus\g{g}_{2\alpha}$; we denote by $N$ the connected subgroup of $G$ whose Lie algebra is $\g{n}$. The subspace $\g{a}\oplus\g{n}$ is then a solvable subalgebra of $\g{g}$ and we denote by $AN$ the connected subgroup of $G$ whose Lie algebra is $\g{a}\oplus\g{n}$. One can show that $AN$ acts simply transitively on $\C H^2$, and that the metric of $\C H^2$ induces a left-invariant metric in $AN$ that we denote by $\langle\,\cdot\,,\,\cdot\,\rangle$. We also denote by $J$ the complex structure in $\g{a}\oplus\g{n}$ induced by the complex structure of $T_o\C H^2$. This turns $\g{a}\oplus\g{n}$ into a complex vector space such that $\g{g}_\alpha$ is $J$-invariant (that is, $\g{g}_\alpha\cong\C$), and $J\g{a}=\g{g}_{2\alpha}$. Moreover, the decomposition $\g{a}\oplus\g{g}_\alpha\oplus\g{g}_{2\alpha}$ is orthogonal. We choose a unit vector $B\in\g{a}$ and define $Z=JB\in\g{g}_{2\alpha}$. The Levi-Civita connection of $AN$ in terms of left-invariant vector fields is determined by
\begin{equation}\label{eq:Levi-Civita}
\begin{aligned}
\frac{1}{\sqrt{-c}}\bar{\nabla}_{aB+U+xZ}\bigl(bB+V+yZ\bigr)={}
&\Bigl(xy+\frac{1}{2}\langle U,V\rangle\Bigr)B
-\frac{1}{2}\Bigl(bU+yJU+xJV\Bigr)\!\!\\
&{}+\Bigl(-bx+\frac{1}{2}\langle JU,V\rangle\Bigr)Z,
\end{aligned}
\end{equation}
where $a$, $b$, $x$, $y\in\R$, and $U$, $V\in\g{g}_\alpha$. See for example~\cite{BD09}.

Now assume that $V\in\g{g}_\alpha$ is a unit vector. We have $\g{g}_\alpha=\R V\oplus\R JV$. We define the following subalgebra of $\g{a}\oplus\g{n}$:
\[
\g{h}=\R U_1\oplus\R U_2,
\quad\text{ with }\quad
U_1=\frac{1}{\sqrt{3}}\Bigl(\sqrt{2}B+JV\Bigr),\quad\text{ and }\quad
U_2=\frac{1}{\sqrt{3}}\Bigl(V+\sqrt{2}Z\Bigr).
\]
Let $H$ be the connected subgroup of $AN$ whose Lie algebra is $\g{h}$, and $M=H\cdot o$ the orbit through the origin. Since $AN$ acts simply transitively on $\C H^2$ we may identify $H$ with $M$ for the calculations that follow.

First notice that $\{U_1,U_2\}$ is an orthonormal basis of the tangent space of $M$, and $\langle JU_1,U_2\rangle=1/3$. By homogeneity we conclude that $M$ is a proper slant surface with K\"{a}hler angle $\theta=\arccos(1/3)$. Using~\eqref{eq:Levi-Civita} we get the mean curvature vector and the Gaussian curvature
\[
H=\frac{\sqrt{-c}}{3}\Bigl(B-\sqrt{2}JV\Bigr),\quad\text{ and }\quad K=\frac{c}{6}.
\]
It readily follows from this equation that $\langle H,H\rangle=2K-c(1+3\cos^2\theta)/2$ and hence,~\cite[Theorem~A]{C98} and~\cite[Theorem~5.1]{CT00} imply that $M$ is isometrically congruent to the Chen's surface.

\section{Proof of Theorem A}\label{sec:theoremA}

Let $M$ be an isoparametric submanifold of $\bar{M}^2(c)$. By definition, $M$ has a section at every point, that is, for each $p\in M$ there exists a totally geodesic submanifold $\Sigma_p$ through $p$ such that $T_p\Sigma_p=\nu_p M$. Totally geodesic submanifolds of complex space forms are known to be either complex or totally real.

First we assume that the section is complex. Then, $M$ is an almost complex submanifold of a K\"ahler manifold, and hence, $M$ is K\"ahler. Since the normal bundle of $M$ is flat, \cite[Theorem~19]{ADS04} implies that $M$ is either a point or an open part of $\bar{M}^2(c)$.

Hence, we may assume from now on that sections are totally real. In this case, sections are either geodesics or totally geodesic real projective planes $\R P^2$ in $\C P^2$ or real hyperbolic planes $\R H^2$ in $\C H^2$. If the section is a geodesic, $M$ is an isoparametric hypersurface. The classification of isoparametric hypersurfaces in $\C P^2$ follows from~\cite{Do16}, and all examples are open parts of orbits of cohomogeneity one actions. Indeed, we get from~\cite{Do16} the full classification of isoparametric submanifolds of $\C P^2$, but the arguments that follow for higher codimension are also valid for this case. Isoparametric hypersurfaces in $\C H^n$ have been classified in~\cite[Corollary~1.2]{DDS} and it follows from here that $M$ is an open part of a principal orbit of a cohomogeneity one action on~$\C H^2$.

Therefore, we can assume that $M$ has codimension $2$. Since in this case sections are totally real, it follows that $TM$ and $\nu M$ are both totally real. Indeed, $M$ is Lagrangian as $J T_p M=\nu_p M$ for each $p\in M$.

If $M$ is totally umbilical, then it follows from~\cite{CO74} that $M$ is an open part of a totally geodesic real projective plane $\R P^2$ in $\C P^2$ or a totally geodesic real hyperbolic plane $\R H^2$ in $\C H^2$. However, these are not isoparametric because their normal bundles are not flat.

We denote by $\nu^1 M$ the unit normal bundle of $M$. By assumption $\nu M$ is flat. For a given parallel unit normal vector field $\xi\in\Gamma(\nu^1 M)$ and $r>0$ we define $\Phi^{r,\xi}\colon M\to \bar{M}^2(c)$, $p\mapsto \exp_p(r\xi)$. Let $\gamma_{\xi_p}$ be the geodesic of $\bar{M}^2(c)$ with initial conditions $\gamma_{\xi_p}(0)=p$, $\gamma_{\xi_p}'(0)=\xi_p$. We also define the vector field $\eta^r$ along $\Phi^{r,\xi}$ by $\eta^r(p)=\gamma_{\xi_p}'(r)$.
Parallel submanifolds to $M$ are of the form $M^{r,\xi}=\Phi^{r,\xi}(M)$. We calculate their mean curvature at $\Phi^{r,\xi}(p)$ in the direction of~$\eta^r(p)$.

We denote by $\lambda_1,\lambda_2\colon \nu^1 M\to \R$ the principal curvature functions, which are given by the fact that $\lambda_1(\xi)$ and $\lambda_2(\xi)$ are the eigenvalues of the shape operator $S_\xi$. We have already seen that $M$ cannot be totally umbilical, so we may assume that there exists $\xi\in\nu^1 M$ such that $\lambda_1(\xi)\neq\lambda_2(\xi)$. By continuity, the principal curvature functions are thus different on an open neighborhood of $\xi$ in $\nu^1 M$. In the sequel we assume that calculations take place in such a neighborhood. We also denote by $U_1(\xi)$ and $U_2(\xi)$ a (local) orthonormal frame of $TM$ consisting of principal curvature vectors associated with $\lambda_1(\xi)$ and $\lambda_2(\xi)$.

Let $p\in M$. Using standard Jacobi field theory, we get that $\Phi^{r,\xi}_{*p}(v)=X_{v}(r)$ for each $v\in T_p M$, where $X_{v}$ denotes the Jacobi vector field along $\gamma_\xi$ with initial conditions $X_{v}(0)=v$ and $X_{v}'(0)=-S_\xi(v)$. Here $(\cdot)'$ stands for covariant derivative along $\gamma_\xi$. Recall that the Jacobi equation on $\bar{M}^2(c)$ along $\gamma_\xi$ can be written as $4X''+cX+3c\langle X,J\gamma_\xi'\rangle J\gamma_\xi'=0$. Moreover, it is known that the points where a Jacobi field vanishes correspond to the singularities of the Riemannian exponential map. Since the Riemannian exponential map is a local diffeomorphism, it is then clear that $\Phi^{r,\xi}$ is a local diffeomorphism for sufficiently small values of $r$. Thus, we will take, if necessary, a sufficiently small neighborhood of $p$ and sufficiently small values of $r$ so that $\Phi^{r,\xi}$ is a diffeomorphism.

In order to simplify notation we define $u_i=U_i(\xi_p)$, $i=1,2$, and set $v=u_i$ in the previous calculations. Then $X_{u_i}(t)=f_{\lambda_i}(t)\Pa^\xi_{u_i}(t)+\langle u_i,J\xi\rangle g_{\lambda_i}(t)J\gamma_\xi'(t)$, where $\Pa^\xi_v(t)$ denotes parallel transport of $v\in T_p M$ along the geodesic $\gamma_\xi$. The functions $f_\lambda$ and $g_\lambda$ are defined by
\begin{align*}
f_\lambda(t)
&{}=\cosh\Bigl(\frac{t\sqrt{-c}}{2}\Bigr)
-\frac{2\lambda}{\sqrt{-c}}\sinh\Bigl(\frac{t\sqrt{-c}}{2}\Bigr),\\
g_\lambda(t)
&{}=\Bigl(\cosh\Bigl(\frac{t\sqrt{-c}}{2}\Bigr)-1\Bigr)
\Bigl(1+2\cosh\Bigl(\frac{t\sqrt{-c}}{2}\Bigr)
-\frac{2\lambda}{\sqrt{-c}}\sinh\Bigl(\frac{t\sqrt{-c}}{2}\Bigr)\Bigr).
\end{align*}
(For $c>0$ one would have to replace hyperbolic trigonometric functions by standard trigonometric functions.) In other words, $X_{u_i}$ is the parallel transport along $\gamma_\xi$ of the tangent vector $f_{\lambda_i}u_i+\langle u_i,J\xi\rangle g_{\lambda_i}J\xi$. At this point we recall that $M$ has totally real tangent and normal bundles. Thus, $J\xi$ is tangent to $M$ and can be written as $J\xi=\langle U_1(\xi),J\xi\rangle U_1(\xi)+\langle U_2(\xi),J\xi\rangle U_2(\xi)$. Moreover, since $T_{\Phi^{r,\xi}(p)} M^{r,\xi}=\Phi^{r,\xi}_{*p}(T_p M)$ and $\Phi^{r,\xi}$ is a diffeomorphism, it is then clear that $T_{\Phi^{r,\xi}(p)} M^{r,\xi}=\Pa^\xi_{T_p M}(r)$, that is, the tangent space of $M^{r,\xi}$ at the point $\Phi^{r,\xi}(r)$ is obtained by parallel translation of $T_p M$ along the geodesic $\gamma_\xi$ from $p=\gamma_\xi(0)$ to $\Phi^{r,\xi}(r)=\gamma_\xi(r)$.

The previous considerations allow us to define the endomorphism-valued map of the tangent space $D_\xi(t)\colon T_{\Phi^{t,\xi}(p)} M^{t,\xi}\to T_{\Phi^{t,\xi}(p)} M^{t,\xi}$ by $D_\xi(t)(\Pa^\xi_v(t))=X_v(t)$, for each $v\in T_p M$. Since we are assuming that $r$ is sufficiently small,
$D_\xi(r)$ is actually an isomorphism of the tangent space. We denote now by $S^{r,\xi}_{\eta^r}$ the shape operator of $M^{r,\xi}$ with respect to the radial vector $\eta^r$. It follows from Jacobi field theory that the shape operator of $M^{r,\xi}$ is given by $S^{r,\xi}_{\eta^r}(\Phi^{r,\xi}_{*p}(v))=-X_v'(r)^\top$, where $(\cdot)^\top$ denotes the orthogonal projection onto the tangent space $T_{\Phi^{r,\xi}(p)} M^{r,\xi}$. By the previous calculations, $X_{u_i}'(t)=f_{\lambda_i}'(t)\Pa^\xi_{u_i}(t)+\langle u_i,J\xi\rangle g_{\lambda_i}'(t)J\gamma_\xi'(t)\in T_{\Phi^{r,\xi}(p)} M^{r,\xi}$. This implies that $S^{r,\xi}_{\eta^r}=-D_\xi'(r)D_\xi(r)^{-1}$. Finally, the mean curvature in radial directions is the function $h^{r,\xi}\colon M^{r,\xi}\to\R$ determined by
\[
h^{r,\xi}(\Phi^{r,\xi}(p))=\frac{1}{2}\tr S^{r,\xi}_{\eta^r(p)}
=-\frac{1}{2}\tr D_\xi'(r)D_\xi(r)^{-1}
=-\frac{\frac{d}{dr}\det D_\xi(r)}{2\det D_\xi(r)}.
\]
It is easy to check that $\det D_\xi=f_{\lambda_1}f_{\lambda_2}+\langle U_1(\xi),J\xi\rangle^2 f_{\lambda_2}g_{\lambda_1}+\langle U_2(\xi),J\xi\rangle^2 f_{\lambda_1}g_{\lambda_2}$. The function $h^{r,\xi}\circ\Phi^{r,\xi}$ can be calculated explicitly, but for our purpose it suffices to calculate its Taylor power series expansion. After some relatively long but elementary calculations, and using $\langle U_1(\xi),J\xi\rangle^2
+\langle U_2(\xi),J\xi\rangle^2=\langle J\xi,J\xi\rangle=1$, we get
\begin{align*}
h^{r,\xi}(\Phi^{r,\xi}(p))={}&
\frac{1}{2}\bigl(\lambda_1(\xi_p)+\lambda_2(\xi_p)\bigr)
+\frac{r}{2}\Bigl(\frac{5c}{4}+\lambda_1(\xi_p)^2+\lambda_2(\xi_p)^2\Bigr)\\
&{}+\frac{r^2}{8}\Bigl(c\bigl(\lambda_1(\xi_p)+\lambda_2(\xi_p)\bigr)
+4\bigl(\lambda_1(\xi_p)^3+\lambda_2(\xi_p)^3\bigr)\\
&\phantom{{}+\frac{r^2}{4}\Bigl(}
+3c\bigl(\lambda_1(\xi_p)\langle U_1(\xi_p),J\xi_p\rangle^2
+\lambda_2(\xi_p)\langle U_2(\xi_p),J\xi_p\rangle^2\bigr)\Bigr)+O(r^3).
\end{align*}
Since $M$ is isoparametric, the function $h^{r,\xi}$ is constant by assumption. Since $\Phi^{r,\xi}$ is a diffeomorphism, this is equivalent to requiring that $(h^{r,\xi}\circ\Phi^{r,\xi})(p)$ does not depend on the point~$p$. More precisely, by hypothesis the expression $(h^{r,\xi}\circ\Phi^{r,\xi})(p)$ depends both on $r$ and on the choice of parallel unit normal vector field $\xi\in\Gamma(\nu^1 M)$, but not on the base point $p$ of the vector $\xi_p$. Therefore, using the above power series expansion we obtain that the functions $p\mapsto \lambda_i(\xi)(p)=\lambda_i(\xi_p)$, and $p\mapsto \langle U_i(\xi),J\xi\rangle(p)=\langle U_i(\xi_p),J\xi_p\rangle$, $i=1,2$, are constant for a fixed parallel vector field $\xi\in\Gamma(\nu^1 M)$. By linearity this argument readily implies:

\begin{proposition}\label{prop:isoparametric-Terng}
An isoparametric submanifold of $\bar{M}^2(c)$ is Terng-isoparametric.
\end{proposition}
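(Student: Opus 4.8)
The plan is to notice that flatness of $\nu M$ is already part of the isoparametric hypothesis, so the only thing left to prove is that $M$ has \emph{constant principal curvatures}. I would begin by unwinding this requirement: I must show that for every curve $\sigma\colon I\to M$ and every $\nabla^\perp$-parallel unit normal field $\xi$ along $\sigma$, the eigenvalues of $S_{\xi(t)}$ are independent of $t$. The first step is a reduction that uses only flatness of $\nu M$. Around any point I may pick a $\nabla^\perp$-parallel orthonormal frame $\{\xi_1,\xi_2\}$ of $\nu M$; differentiating $\cos\theta\,\xi_1+\sin\theta\,\xi_2$ with the normal connection shows that a parallel unit normal field along a curve in this neighborhood is exactly one of the form $\cos\theta\,\xi_1+\sin\theta\,\xi_2$ with $\theta$ constant, hence the restriction to the curve of a globally defined parallel unit normal field on the chart. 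It therefore suffices to prove that, for each fixed parallel unit normal field $\xi\in\Gamma(\nu^1 M)$, the principal curvature functions $p\mapsto\lambda_i(\xi_p)$ are constant on $M$.

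At this point I would simply read off the consequence of the expansion computed above, observing that only its first two coefficients are needed. Indeed, the coefficient of $r^0$ is $\tfrac12(\lambda_1(\xi_p)+\lambda_2(\xi_p))=\tfrac12\tr S_{\xi_p}$, which is just the mean curvature of $M$ itself in the direction $\xi$, and the coefficient of $r^1$ is $\tfrac12\bigl(\tfrac{5c}{4}+\lambda_1(\xi_p)^2+\lambda_2(\xi_p)^2\bigr)=\tfrac12\bigl(\tfrac{5c}{4}+\tr S_{\xi_p}^2\bigr)$. Both of these are globally defined smooth functions on $M$ (traces do not depend on the multiplicity of the eigenvalues), and the isoparametric hypothesis forces each of them to be independent of $p$. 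Hence $\lambda_1(\xi_p)+\lambda_2(\xi_p)$ and $\lambda_1(\xi_p)^2+\lambda_2(\xi_p)^2$, and therefore $\lambda_1(\xi_p)\lambda_2(\xi_p)$, are constant on all of $M$. Consequently, for each $p$ the pair $\lambda_1(\xi_p),\lambda_2(\xi_p)$ consists of the two roots of a fixed quadratic; being continuous in $p$ and valued in this fixed two-point set, each $\lambda_i(\xi_p)$ is locally constant, hence constant on the connected manifold $M$. Since this argument applies verbatim to every parallel unit normal field $\xi$ with no restriction on the eigenvalues, it subsumes the linearity remark and yields the Terng condition in the codimension-two, totally real case. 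The remaining cases have already been disposed of in the proof of Theorem~A: if the section is complex then $M$ is a point or an open part of $\bar{M}^2(c)$, for which the constancy of principal curvatures is vacuous, and if the section is a geodesic then $M$ is an open part of a principal orbit of a cohomogeneity-one action, so that the conclusion follows from homogeneity.

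I do not expect a genuine obstacle, since the heavy lifting—the Jacobi-field analysis and the Taylor expansion of $h^{r,\xi}\circ\Phi^{r,\xi}$—is already in hand before the statement. The two points that deserve a line of care are the reduction via flatness (identifying a parallel unit normal field along a curve with the restriction of a constant-angle combination of a parallel frame) and the passage from the constancy of the symmetric functions to the constancy of the individual eigenvalues by the continuity-plus-connectedness argument above. Both are routine, as is the bookkeeping needed to confirm that the cases settled earlier also satisfy the definition of Terng-isoparametric.
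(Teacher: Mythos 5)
Your proof is correct and follows essentially the same route as the paper: both rest on the Taylor expansion of the radial mean curvature $h^{r,\xi}\circ\Phi^{r,\xi}$ coming from the Jacobi-field computation, and read off from its first two coefficients that $\tr S_\xi$ and $\tr S_\xi^2$ are constant for every parallel unit normal field $\xi$, whence each principal curvature is constant. Your explicit reduction from parallel fields along curves to parallel fields on charts, and your observation that writing the coefficients as traces removes the need for the paper's restriction to a neighborhood where $\lambda_1(\xi)\neq\lambda_2(\xi)$ (which the paper instead dispatches with its ``by linearity'' remark), are careful spellings-out of steps the paper leaves implicit rather than a different argument.
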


In order to conclude the proof of Theorem A we simply have to verify the following assertion:

\begin{proposition}\label{prop:Lagrangian-Terng}
Let $M$ be a Lagrangian, Terng-isoparametric submanifold of $\bar{M}^2(c)$. Then, $M$ is an open part of a principal orbit of a cohomogeneity two polar action on $\bar{M}^2(c)$.
\end{proposition}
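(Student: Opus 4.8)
The plan is to exploit the rigid algebraic structure of a Lagrangian surface and to show that the Terng condition forces the second fundamental form to be parallel. Since $M$ is Lagrangian, $J$ maps $T_pM$ isometrically onto $\nu_pM$, and a short computation using $\bar{\nabla}J=0$ shows that the cubic form $C(X,Y,Z)=\langle\II(X,Y),JZ\rangle$ is fully symmetric in its three tangent arguments and that the normal connection satisfies $\nabla^\perp_X(JY)=J(\nabla_X Y)$. In particular $R^\perp(X,Y)JZ=JR(X,Y)Z$, so the hypothesis that $\nu M$ be flat forces $M$ to be intrinsically flat; being a surface, $M$ has Gaussian curvature $K=0$. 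Moreover, for a Lagrangian submanifold of $\bar{M}^2(c)$ the ambient curvature $\bar{R}(X,Y)Z$ is tangent whenever $X,Y,Z$ are tangent, so the Codazzi equation reduces to the statement that $\nabla C$ is fully symmetric.

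First I would fix, on a simply connected piece of $M$, a parallel orthonormal tangent frame $\{e_1,e_2\}$, which exists because $M$ is flat; then $\{Je_1,Je_2\}$ is a parallel orthonormal normal frame. Writing $h_{ijk}=\langle\II(e_i,e_j),Je_k\rangle$ and abbreviating $p=h_{111}$, $q=h_{112}$, $r=h_{122}$, $s=h_{222}$, the shape operators $S_{Je_1}$ and $S_{Je_2}$ have these four functions as entries, and the Codazzi equations become the compatibility relations $e_2p=e_1q$, $e_2q=e_1r$, $e_2r=e_1s$. The Gauss equation, together with $K=0$, reads $\det S_{Je_1}+\det S_{Je_2}=-c/4$.

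Next I would translate the Terng condition. Since $\nabla^\perp(Je_i)=0$, every parallel unit normal is $\xi_\phi=\cos\phi\,Je_1+\sin\phi\,Je_2$ for a locally constant $\phi$, and constancy of the eigenvalues of $S_{\xi_\phi}$ for each $\phi$ is equivalent to the constancy on $M$ of $\tr S_{Je_i}$, $\det S_{Je_i}$ and $\tr(S_{Je_1}S_{Je_2})$. These yield $p+r=a$, $q+s=b$, $pr-q^2=d_1$ and the linear relation $aq-bp=\mathrm{const}$, with $a,b,d_1$ constants. When $(a,b)\neq(0,0)$ the last two relations describe the intersection of a line with a circle in the $(p,q)$-plane, hence a finite set, so $(p,q)$, and therefore all of $p,q,r,s$, are constant by continuity. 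In the remaining minimal case $a=b=0$ the Gauss relation gives $p^2+q^2=c/8$, which is impossible for $c<0$ (so this case does not occur in $\C H^2$) and, for $c>0$, combines with the Codazzi relations $e_1p=-e_2q$ and $e_1q=e_2p$ to make $p+iq$ an antiholomorphic function of constant modulus, hence constant. In every case the $h_{ijk}$ are constant in the parallel frame, i.e.\ $M$ has parallel second fundamental form.

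Finally, a flat Lagrangian surface with parallel second fundamental form has all its fundamental invariants constant; by the fundamental theorem of submanifold theory any two points are related by an ambient isometry preserving $M$, so $M$ is extrinsically homogeneous, and the admissible constant values of $(p,q,r,s)$ form a short explicit list. For $\C H^2$ I would realize each such $M$ as an orbit inside $AN$ using the model and the connection formula~\eqref{eq:Levi-Civita} of Section~\ref{sec:Chen}, and read off that the corresponding action is polar of cohomogeneity two with totally geodesic section $\R H^2=T_p\Sigma_p$; for $\C P^2$ the analogous statement follows from~\cite{Do16,PT99}. Matching the finitely many admissible invariant tuples with the principal orbits of the cohomogeneity two polar actions classified in~\cite{PT99,BD11a} then identifies $M$ with an open part of such a principal orbit. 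I expect the main obstacle to be the rigidity step: showing that the Terng condition, which a priori fixes only the eigenvalues of the shape operators and not the relative position of the principal directions and $J\xi$, nonetheless forces the second fundamental form to be parallel, the delicate point being the minimal degenerate case, where one must invoke the Codazzi relations and a constant-modulus argument in place of a purely algebraic count.
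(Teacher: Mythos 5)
Your reduction to a flat surface and your rigidity computation are correct, and the latter goes beyond what the paper itself establishes. The paper's proof only records that $M$ is flat (by essentially your observation: $J$ carries a parallel normal frame to a parallel tangent frame) and that the mean curvature vector is parallel (immediate from constant principal curvatures plus flat normal bundle), and then invokes \cite[Theorem~2.1]{DDV} for Lagrangian flat surfaces with parallel mean curvature. Your intermediate claim that the full second fundamental form is parallel is stronger, and your derivation checks out: the Terng condition is equivalent to the constancy of $\tr S_{Je_i}$, $\det S_{Je_i}$ and $\tr(S_{Je_1}S_{Je_2})=aq+br$; in the case $(a,b)\neq(0,0)$ the line--circle intersection in the $(p,q)$-plane is finite, forcing constancy; and in the minimal case the Gauss identity $p^2+q^2=c/8$ rules out $c<0$ and feeds the antiholomorphicity argument for $c>0$.

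The genuine gap is in your endgame. First, the admissible tuples $(p,q,r,s)$ do \emph{not} form a finite list: the only pointwise constraint is the Gauss identity $(pr-q^2)+(qs-r^2)=-c/4$, so modulo rotation of the frame there is a two-parameter continuum of congruence classes --- as there must be, since the principal orbits of a cohomogeneity-two polar action form a two-parameter family of mutually non-congruent surfaces. Hence the ``matching'' is not a finite verification; one must show that the known polar-action orbits realize \emph{every} admissible tuple, which is essentially the content of the theorem of \cite{DDV} that the paper cites. Second, your plan to realize each example as an orbit inside $AN$ cannot cover all cases: the polar action of the maximal torus $U(1)\times U(1)\subset S(U(1)U(2))$ on $\C H^2$ has compact Lagrangian tori as principal orbits, whereas every orbit of a connected subgroup of the simply connected solvable group $AN$ is diffeomorphic to that subgroup, hence non-compact. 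Third, the uniqueness statement you appeal to is not the real-space-form fundamental theorem but its Lagrangian/K\"ahler version for complex space forms (congruence by a holomorphic isometry determined by the metric and the cubic form); this exists but must be invoked as such. The simplest repair is the paper's own route: parallel second fundamental form (indeed, already the Terng hypothesis with flat normal bundle) gives parallel mean curvature, and then \cite[Theorem~2.1]{DDV} finishes the proof --- at which point your parallel-$\II$ computation, though correct and of independent interest, becomes unnecessary.
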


\begin{proof}
Since $M$ is Lagrangian, $J\nu_p M=T_p M$. Let $\xi\in\Gamma(\nu M)$ be a parallel normal vector field and $X\in\Gamma(TM)$. We denote by $(\cdot)^\perp$ the orthogonal projection onto~$\nu M$. As $J\xi$ is tangent and since $\bar{M}^2(c)$ is K\"{a}hler, the definition of the second fundamental form yields
\begin{align*}
0 &{}=\nabla_X^\perp \xi=-\nabla^\perp_X J^2\xi
=-\bigl(\bar{\nabla}_X J^2\xi\bigr)^\perp\\
&{}=-\bigl(J\bar{\nabla}_X J\xi\bigr)^\perp
=-\bigl(J(\nabla_X J\xi+\II(X,J\xi))\bigr)^\perp
=-J\nabla_X J\xi.
\end{align*}
Therefore, $\nabla J\xi=0$ and it follows that $M$ is flat.

Since $M$ has constant principal curvatures and flat normal bundle, it is clear that $M$ has parallel mean curvature. Thus, $M$ is a Lagrangian, flat surface of $\bar{M}^2(c)$ with parallel mean curvature and it was shown in~\cite[Theorem~2.1]{DDV} 
that $M$ is then an open part of a principal orbit of a cohomogeneity two polar action on $\bar{M}^2(c)$.
\end{proof}

In particular, propositions~\ref{prop:isoparametric-Terng} and~\ref{prop:Lagrangian-Terng}, together with the fact that the principal orbits of a polar action are isoparametric submanifolds implies

\begin{corollary}
A Lagrangian submanifold of $\bar{M}^2(c)$ is isoparametric if and only if it is Terng-isoparametric.
\end{corollary}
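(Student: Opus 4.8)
The plan is to establish the two implications separately, combining the two preceding propositions with the fact, recalled just above, that principal orbits of polar actions are isoparametric submanifolds. The Lagrangian hypothesis is needed only for the converse direction.

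First I would treat the forward implication. If $M$ is isoparametric, then Proposition~\ref{prop:isoparametric-Terng} immediately gives that $M$ is Terng-isoparametric. This step is essentially free, and it does not even use that $M$ is Lagrangian.

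Next I would handle the converse. Assuming $M$ is Lagrangian and Terng-isoparametric, Proposition~\ref{prop:Lagrangian-Terng} shows that $M$ is congruent to an open part of a principal orbit of a cohomogeneity two polar action on $\bar{M}^2(c)$. Since such a principal orbit is isoparametric, it then suffices to note that isoparametricity is a local property: flatness of the normal bundle, constancy of the mean curvature of the nearby parallel submanifolds in radial directions, and the existence of sections are all conditions inherited by open subsets. Consequently $M$ is isoparametric.

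The proof is therefore a direct assembly of the two propositions, and no genuine difficulty arises. The only point that warrants a word of justification is the locality observation in the converse direction, which guarantees that passing to an open part of the principal orbit does not destroy isoparametricity; this is immediate from the pointwise character of the defining conditions.
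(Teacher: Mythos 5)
Your proposal is correct and takes essentially the same route as the paper, which derives the corollary directly from Propositions~\ref{prop:isoparametric-Terng} and~\ref{prop:Lagrangian-Terng} together with the fact that principal orbits of polar actions are isoparametric. Your additional remark that isoparametricity passes to open subsets is a harmless clarification of a point the paper leaves implicit.
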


\begin{remark}
There is a shorter alternative proof of Theorem A that does not require working with Jacobi fields. Indeed, once the problem was reduced to the case of an isoparametric Lagrangian surface $M$, we could have argued as in the proof of Proposition~\ref{prop:Lagrangian-Terng} to show that $M$ is flat. Since by assumption $M$ is Lagrangian and has parallel mean curvature, by virtue of~\cite{DDV}, $M$ is a piece of a principal orbit of a cohomogeneity two polar action. However, we have preferred to include the longer argument because it shows that, in order to prove that an isoparametric submanifold in $\bar{M}^2(c)$ has constant principal curvatures, it is not necessary to appeal to the strong result in~\cite{DDV}.
\end{remark}

\section{Proof of Theorem B}\label{sec:theoremB}

We now consider a Terng-isoparametric submanifold $M$ of $\bar{M}^2(c)$. In particular, the normal bundle of $M$ is flat, and we have already seen in Section~\ref{sec:theoremA} that, if the normal bundle of $M$ is complex, then $M$ is either a point or an open subset of $\bar{M}^2(c)$. Thus, we may assume from now on that the normal bundle of $M$ is not complex.

If the normal bundle of $M$ is totally real, $M$ is either a hypersurface or a Lagrangian submanifold. In the first case, $M$ is a hypersurface of $\bar{M}^2(c)$ with constant principal curvatures. These were classified in~\cite{W83} for $\C P^2$ and in~\cite{BD07} for $\C H^2$ where it was shown that such hypersurfaces are open parts of homogeneous hypersurfaces. In particular they are open parts of orbits of cohomogeneity one actions, which are polar.

If the normal bundle is totally real and has rank $2$, then $M$ is Lagrangian. Hence, it follows from Proposition~\ref{prop:Lagrangian-Terng} that $M$ is an open part of a principal orbit of a cohomogeneity two polar action on $\bar{M}^2(c)$.

Therefore, we can assume from now on that the normal bundle of $M$ is neither complex nor totally real. If $M$ is $1$-dimensional, then $M$ has to be a geodesic or a circle~\cite[\S8.4]{BCO03}, so we also assume that $M$ is $2$-dimensional.

Hence, we take, at least locally, a parallel orthonormal frame $\{\xi,\eta\}$ of the normal bundle of $M$, and let $\{U_1,U_2\}$ be an orthonormal frame of the tangent bundle of $M$ such that $S_\xi U_i=\lambda_i U_i$, $i=1,2$. Since $\xi$ is parallel, $\lambda_1$ and $\lambda_2$ are constant by assumption. At this point we observe that the mean curvature vector of $M$ is parallel because the normal bundle is flat and the principal curvatures are constant (and hence the trace of each shape operator with respect to a parallel normal vector field is constant). Therefore, we may further assume that $\{\xi,\eta\}$ is chosen so that $\eta$ is perpendicular to the mean curvature vector field.

Using the fact that $TM$ and $\nu M$ are neither complex nor totally real we can write $J\xi=b_1 U_1+b_2 U_2+a\eta$, where $a,b_1,b_2\colon M\to \R$ are smooth functions with $b_1^2+b_2^2+a^2=1$, and $b_1^2+b_2^2\neq 0$, $a\neq 0$. Since $\{U_1,U_2,\xi,\eta\}$ is an orthonormal frame of $T\C H^2$ we can write
\begin{align*}
-\xi={} &J^2\xi=b_1JU_1+b_2JU_2+aJ\eta\\
{}={}& b_1(\langle JU_1,U_2\rangle U_2-b_1\xi+\langle JU_1,\eta\rangle\eta)
+b_2(-\langle JU_1,U_2\rangle U_1-b_2\xi+\langle JU_2,\eta\rangle\eta)\\
& {}+a(-\langle JU_1,\eta\rangle U_1-\langle JU_2,\eta\rangle U_2-a\xi)\\
{}={} & (-b_2\langle JU_1,U_2\rangle-a\langle JU_1,\eta\rangle)U_1
+(b_1\langle JU_1,U_2\rangle-a\langle JU_2,\eta\rangle)U_2\\
&{}+(b_1\langle JU_1,\eta\rangle+b_2\langle JU_2,\eta\rangle)\eta-\xi.
\end{align*}
Thus, we have
\[
-b_2\langle JU_1,U_2\rangle-a\langle JU_1,\eta\rangle=
b_1\langle JU_1,U_2\rangle-a\langle JU_2,\eta\rangle=
b_1\langle JU_1,\eta\rangle+b_2\langle JU_2,\eta\rangle=0.
\]
Using these equalities and $b_1^2+b_2^2+a^2=1$, it is easy to show that we can write (up to a choice of orientation)
\begin{align*}
J\xi & {}=b_1 U_1+b_2 U_2+a\eta,
& J\eta & {}=-b_2U_1+b_1U_2-a\xi,\\
JU_1 & {}=-a U_2-b_1 \xi+b_2\eta,
& JU_2 & {}=a U_1-b_2 \xi-b_1\eta.
\end{align*}

For $i\in\{1,2\}$, using the Codazzi equation, taking into account that $\xi$ is parallel and that $\lambda_1$ and $\lambda_2$ are constant, we get
\begin{align*}
-\frac{3cab_i}{4}&{}=\langle\bar{R}(U_1,U_2)U_i,\xi\rangle
=(\lambda_2-\lambda_i)\langle\nabla_{U_1}U_2,U_i\rangle
-(\lambda_1-\lambda_i)\langle\nabla_{U_2}U_1,U_i\rangle.
\end{align*}
Since $a\neq 0$ and $b_1^2+b_2^2\neq 0$, we readily get $\lambda_1\neq\lambda_2$.
Since $\{U_1,U_2\}$ is an orthonormal frame of the tangent bundle we obtain
\begin{align}\label{eq:nablaUiUj}
\nabla_{U_i}U_i
&{}=-\frac{3cab_i}{4(\lambda_1-\lambda_2)}U_j,
&\nabla_{U_i}U_j
&{}=\frac{3cab_i}{4(\lambda_1-\lambda_2)}U_i,
& i,j\in\{1,2\},i\neq j.
\end{align}

Now, since $\nu M$ is flat, the Ricci equation implies
\begin{align*}
\frac{c}{4}(-b_1^2-b_2^2+2a^2)
&{}=\langle\bar{R}(U_1,U_2)\xi,\eta\rangle
=\langle S_\xi U_1,S_\eta U_2\rangle
-\langle S_\eta U_1,S_\xi U_2\rangle\\
&{}=(\lambda_1-\lambda_2)\langle S_\eta U_1,U_2\rangle.
\end{align*}
Recall that, since $\eta$ is perpendicular to the mean curvature vector, we have $\tr S_\eta=0$,
and thus, with respect to the orthonormal basis $\{U_1,U_2\}$ the shape operator $S_\eta$ can be written as
\begin{equation}\label{eq:Seta}
S_\eta=
\begin{pmatrix}
\mu & -\frac{c(1-3a^2)}{4(\lambda_1-\lambda_2)}\\
-\frac{c(1-3a^2)}{4(\lambda_1-\lambda_2)} & -\mu
\end{pmatrix}.
\end{equation}
for some function $\mu\colon M\to \R$.

By assumption, the eigenvalues of $S_\eta$ are constant, or equivalently, the functions
\begin{equation}\label{eq:trSeta}
\tr S_\eta=0\quad\text{ and }\quad
\tr S_\eta^2
=2\mu^2+\frac{c^2(1-3a^2)^2}{8(\lambda_1-\lambda_2)^2}
\end{equation}
are constant.

Now we calculate the derivatives of $b_1$, $b_2$ and $a$. We take $i,j\in\{1,2\}$, $i\neq j$. Using~\eqref{eq:nablaUiUj} and~\eqref{eq:Seta} we obtain
\begin{equation}\label{eq:derivatives}
\begin{aligned}
U_ib_i
&{}=U_i\langle U_i,J\xi\rangle
=\langle\bar{\nabla}_{U_i}U_i,b_i U_i+b_j U_j+a\eta\rangle
+\langle U_i,\bar{\nabla}_{U_i}J\xi\rangle\\
&{}=b_j\langle\nabla_{U_i}U_i,U_j\rangle
+a\langle U_i,S_\eta U_i\rangle
-\lambda_i\langle U_i,JU_i\rangle
=-\frac{3cab_1b_2}{4(\lambda_1-\lambda_2)}-a(-1)^{i}\mu,\\[1ex]
U_ib_j
&{}=U_i\langle U_j,J\xi\rangle
=\langle\bar{\nabla}_{U_i}U_j,b_i U_i+b_j U_j+a\eta\rangle
+\langle U_j,\bar{\nabla}_{U_i}J\xi\rangle\\
&{}=b_i\langle\nabla_{U_i}U_j,U_i\rangle
+a\langle U_j,S_\eta U_i\rangle
-\lambda_i\langle U_j,JU_i\rangle\\
&{}=\frac{3cab_i^2}{4(\lambda_1-\lambda_2)}
-\frac{ca(1-3a^2)}{4(\lambda_1-\lambda_2)}-a(-1)^{i}\lambda_i,\\[1ex]
U_ia
&{}=U_i\langle J\xi,\eta\rangle
=\langle\bar{\nabla}_{U_i}J\xi,\eta\rangle
+\langle b_iU_i+b_jU_j+a\eta,\bar{\nabla}_{U_i}\eta\rangle\\
&{}=-\lambda_i\langle JU_i,\eta\rangle
-b_i\langle U_i,S_\eta U_i\rangle
-b_j\langle U_j,S_\eta U_i\rangle\\
&{}=b_j(-1)^i\lambda_i+b_i(-1)^i\mu+\frac{cb_j(1-3a^2)}{4(\lambda_1-\lambda_2)}.
\end{aligned}
\end{equation}

In order to get a relation for the derivatives of $\mu$, we use the Codazzi equation together with~\eqref{eq:nablaUiUj},~\eqref{eq:Seta} and~\eqref{eq:derivatives} to get, after some calculations
\begin{align*}
-\frac{3c(-1)^i a b_j}{4}
&{}=\langle\bar{R}(U_1,U_2)U_i,\eta\rangle\\
&{}=\langle\nabla_{U_1}S_\eta U_2,U_i\rangle
-\langle\nabla_{U_1}U_2,S_\eta U_i\rangle
-\langle\nabla_{U_2} S_\eta U_1,U_i\rangle
+\langle\nabla_{U_2}U_1,S_\eta U_i\rangle\\
&{}=-U_j\mu-\frac{3ca(b_j\lambda_i+2b_i\mu)}{2(\lambda_1-\lambda_2)}.
\end{align*}
Thus, we obtain
\begin{equation}\label{eq:Uimu}
U_i\mu=\frac{3ca}{4(\lambda_1-\lambda_2)}(b_i\lambda_i-3b_i\lambda_j-4b_j\mu),\quad i,j\in\{1,2\},i\neq j.
\end{equation}

The aim of the argument that follows is to show that the functions $b_1$, $b_2$, $a$ and $\mu$ are constant. We first have

\begin{lemma}\label{lemma:constant}
If the function $a\colon M\to\R$ is constant, then $b_1$, $b_2$ and $\mu$ are also constant. \end{lemma}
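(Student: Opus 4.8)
The plan is to turn the hypothesis ``$a$ is constant'' into purely algebraic information by substituting it into the constancy of $\tr S_\eta^2$ and into the derivative formulas \eqref{eq:derivatives} and \eqref{eq:Uimu}. First I would show that $\mu$ is constant. Since $\eta$ is parallel and $M$ has constant principal curvatures, the eigenvalues of $S_\eta$, and hence $\tr S_\eta^2$, are constant. By \eqref{eq:trSeta} we have $\tr S_\eta^2=2\mu^2+\frac{c^2(1-3a^2)^2}{8(\lambda_1-\lambda_2)^2}$; as $a$, $\lambda_1$ and $\lambda_2$ are constant, the second summand is constant, so $\mu^2$ is constant. A smooth function with constant square is locally constant, whence $\mu$ is constant.

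Next I would extract two homogeneous linear relations for $b_1$ and $b_2$. Since $a$ is constant, the expression for $U_ia$ in \eqref{eq:derivatives} vanishes for $i=1,2$. Writing $K=\frac{c(1-3a^2)}{4(\lambda_1-\lambda_2)}$, which is now a constant, these two equations become $-\mu b_1+(K-\lambda_1)b_2=0$ and $(K+\lambda_2)b_1+\mu b_2=0$, and all coefficients are constant. Thus $(b_1,b_2)$ lies pointwise in the kernel of the constant matrix $\left(\begin{smallmatrix} -\mu & K-\lambda_1\\ K+\lambda_2 & \mu\end{smallmatrix}\right)$. Because $b_1^2+b_2^2=1-a^2\neq 0$, this kernel is nontrivial, so the matrix is singular; provided it is not identically zero it has rank one and one-dimensional kernel. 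In that situation $(b_1,b_2)$ is a scalar multiple of a fixed vector, and the constraint $b_1^2+b_2^2=1-a^2$ constant forces the scalar to have constant square, hence to be (locally) constant. Therefore $b_1$ and $b_2$ are constant.

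The single obstruction I anticipate is the degenerate possibility that the coefficient matrix vanishes identically, that is $\mu=0$ and $\lambda_1=K=-\lambda_2$, in which case the two relations above are vacuous. I expect this to be the main point requiring care, and I would dispose of it using \eqref{eq:Uimu}. With $\mu\equiv 0$ constant we have $U_i\mu=0$, and since $a\neq 0$, $c\neq 0$ and $\lambda_1\neq\lambda_2$, equation \eqref{eq:Uimu} forces $b_i(\lambda_i-3\lambda_j)=0$ for $i\neq j$. In this degenerate subcase $\lambda_2=-\lambda_1$, so these read $4\lambda_1 b_1=0$ and $-4\lambda_1 b_2=0$; as $(b_1,b_2)\neq(0,0)$ this yields $\lambda_1=\lambda_2=0$, contradicting $\lambda_1\neq\lambda_2$. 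Hence the degenerate case never occurs, the rank-one argument always applies, and $b_1$, $b_2$ and $\mu$ are all constant, as claimed.

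The heart of the argument is thus a small piece of linear algebra once $\mu$ has been shown constant; the only genuine subtlety is ruling out the trivial kernel equation, which is exactly where the extra relation \eqref{eq:Uimu} (rather than just \eqref{eq:derivatives}) is needed.
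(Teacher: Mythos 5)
Your proof is correct, and its skeleton matches the paper's: both first deduce from \eqref{eq:trSeta} that $\mu^2$, hence $\mu$, is constant, and then trap $(b_1,b_2)$ in the kernel of a $2\times 2$ homogeneous linear system with \emph{constant} coefficients, concluding constancy of $b_1,b_2$ from $b_1^2+b_2^2=1-a^2\neq 0$. The genuine difference is which identities supply that system. The paper uses \eqref{eq:Uimu}: since $\mu$ is constant, $U_i\mu=0$ yields $(\lambda_1-3\lambda_2)b_1-4\mu b_2=0$ and $-4\mu b_1+(\lambda_2-3\lambda_1)b_2=0$, and for this system the rank-zero case is self-destructing, since $\lambda_1=3\lambda_2$ and $\lambda_2=3\lambda_1$ force $\lambda_1=\lambda_2=0$, contradicting $\lambda_1\neq\lambda_2$; no auxiliary equation is needed. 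You instead take the system $-\mu b_1+(K-\lambda_1)b_2=0$, $(K+\lambda_2)b_1+\mu b_2=0$ coming from $U_ia=0$ in \eqref{eq:derivatives}, with $K=c(1-3a^2)/(4(\lambda_1-\lambda_2))$, and there the degenerate case $\mu=0$, $\lambda_1=K=-\lambda_2$ cannot be excluded from that system alone, so you correctly fall back on \eqref{eq:Uimu} to rule it out (with $\mu\equiv 0$ it forces $b_i(\lambda_i-3\lambda_j)=0$, which together with $\lambda_2=-\lambda_1$ and $(b_1,b_2)\neq(0,0)$ gives $\lambda_1=\lambda_2=0$, a contradiction). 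So your route needs both families of identities where the paper's needs only \eqref{eq:Uimu}; in exchange, it makes visible that the hypothesis ``$a$ constant'' feeds back directly through \eqref{eq:derivatives}. Both arguments are complete; the paper's choice of system is simply the more economical one.
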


\begin{proof}
If $a$ is constant, it readily follows from~\eqref{eq:trSeta} that $\mu$ is constant. Hence, from~\eqref{eq:Uimu} we get $(\lambda_1-3\lambda_2)b_1-4\mu b_2=-4\mu b_1+(\lambda_2-3\lambda_1)b_2=0$. This is a homogeneous linear system in the variables $b_1$ and $b_2$, whose coefficients are constant. It cannot have a unique solution because $b_1=b_2=0$ is not possible, and thus the rank of the matrix of the system cannot be~$2$. The rank cannot be $0$ because that would imply $\lambda_1=\lambda_2=0$. Thus, it has rank one and we can write $b_2=\nu b_1$ for some constant $\nu\in \R$. Then $1-a^2=b_1^2+b_2^2=(1+\nu^2)b_1^2$ implies that $b_1$ is constant, and hence also $b_2$.
\end{proof}

In view of Lemma~\ref{lemma:constant}, the calculations that follow aim at proving that $a$ is constant. Recall from~\eqref{eq:trSeta} that $\tr S_\eta^2$ is constant. Hence there is $k\in\R$ such that
\begin{equation}\label{eq:mu}
\mu^2=k-\frac{c^2(1-3a^2)}{16(\lambda_1-\lambda_2)^2}.
\end{equation}
Taking derivatives in~\eqref{eq:mu} with respect to $U_i$, using~\eqref{eq:derivatives} and~\eqref{eq:Uimu} and substituting $\mu^2$ by~\eqref{eq:mu} we get, after some calculations
\begin{equation}\label{eq:get-mu}
\begin{aligned}
0={}
&{} b_j\bigl((-1)^jc^2(1-3a^2)^2+4c(1-3a^2)\lambda_i(\lambda_1-\lambda_2)
+32(-1)^ik(\lambda_1-\lambda_2)^2\bigr)\\
&{}+4b_i(\lambda_1-\lambda_2)
\bigl(c(1-3a^2)-2(-1)^i(\lambda_1-\lambda_2)(\lambda_i-3\lambda_j)\bigr)\mu.
\end{aligned}
\end{equation}

If
$c(1-3a^2)+2(\lambda_1-\lambda_2)(\lambda_1-3\lambda_2)$ or $c(1-3a^2)-2(\lambda_1-\lambda_2)(\lambda_2-3\lambda_1)$
is zero in an open set, then the function $a$ is constant and it follows from Lemma~\ref{lemma:constant} that $b_1$, $b_2$ and~$\mu$ are also constant. As a consequence, we may assume that there is a point in $M$ where these two functions do not vanish, and thus, they do not vanish in an open set. Moreover, if $b_i=0$ in an open set, then it follows from the first equation in~\eqref{eq:derivatives} that $\mu=0$, so by~\eqref{eq:mu}, $a$ is constant, and thus also $b_j$. Hence, we also assume that $b_i$, $i=1,2$, is not zero on an open set. Thus, from~\eqref{eq:get-mu} we get two possible expressions for $\mu$, and combining this with~\eqref{eq:mu} yields
\[
\scriptsize
\begin{aligned}
0={}&
\Bigl(k-\frac{c^2(1-3a^2)^2}{16(\lambda_1-\lambda_2)^2}\Bigr)-
\Bigl(-b_2\frac{c^2(1-3a^2)^2+4c(1-3a^2)\lambda_1(\lambda_1-\lambda_2)
-32k(\lambda_1-\lambda_2)^2}{4b_1(\lambda_1-\lambda_2)
(c(1-3a^2)+2\lambda_1^2-8\lambda_1\lambda_2+6\lambda_2^2)}\Bigr)\cdot\\
&{}\phantom{\Bigl(k-\frac{c^2(1-3a^2)}{16(\lambda_1-\lambda_2)^2}\Bigr)-{}}
\cdot\Bigl(b_1\frac{c^2(1-3a^2)^2-4c(1-3a^2)\lambda_2(\lambda_1-\lambda_2)
-32k(\lambda_1-\lambda_2)^2}{4b_2(\lambda_1-\lambda_2)
(c(1-3a^2)+6\lambda_1^2-8\lambda_1\lambda_2+2\lambda_2^2)}\Bigr)\\
{}={}
&\frac{-c^3(1-3a^2)^3-3c^2(1-3a^2)^2(4k+(\lambda_1-\lambda_2)^2)
+16k(\lambda_1-\lambda_2)^2(16k+3\lambda_1^2-10\lambda_1\lambda_2+3\lambda_2^2)}{4
(c(1-3a^2)+2\lambda_1^2-8\lambda_1\lambda_2+6\lambda_2^2)
(c(1-3a^2)+6\lambda_1^2-8\lambda_1\lambda_2+2\lambda_2^2)}.
\end{aligned}
\]
This equation implies that $1-3a^2$ is constant, and hence, by Lemma~\ref{lemma:constant} we get that $b_1$, $b_2$ and $\mu$ are also constant.

Using~\eqref{eq:derivatives} we get
\[
0=U_1b_1+U_2b_2=-\frac{3cab_1b_2}{2(\lambda_1-\lambda_2)},
\]
and since $a\neq 0$ we get $b_1=0$ or $b_2=0$. We may assume $b_1\neq 0$, $b_2= 0$, $a^2=1-b_1^2$. Then, by~\eqref{eq:derivatives} we obtain $0=U_2b_2=-a\mu$, so $\mu=0$. Next, equation~\eqref{eq:Uimu} implies that $0=U_1\mu=3cab_1(\lambda_1-3\lambda_2)/(4(\lambda_1-\lambda_2))$, and thus, $\lambda_1=3\lambda_2\neq 0$. Finally, using~\eqref{eq:derivatives} once more,
\[
0=U_1b_2=\frac{3cab_1^2-ca(1-3a^2)}{4(\lambda_1-\lambda_2)}+a\lambda_1
=\frac{a(c+12\lambda_2^2)}{4\lambda_2}.
\]
Hence, if $c>0$ we get a contradiction, which yields

\begin{proposition}
A Terng-isoparametric surface of $\C P^2$ is isoparametric.
\end{proposition}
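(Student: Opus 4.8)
The plan is to collect the case analysis of Section~\ref{sec:theoremB} and to show that, for $c>0$, the only configuration not already known to be isoparametric is empty. For a surface $M$ in $\bar{M}^2(c)$ the normal bundle has rank two: if it is complex then $M$ is a complex curve with flat normal bundle, forced by the argument of Section~\ref{sec:theoremA} to be a point or an open part of the ambient space and hence not a genuine surface; if it is totally real then $M$ is Lagrangian and is isoparametric by Proposition~\ref{prop:Lagrangian-Terng}. Thus the entire content of the statement is to rule out, when $c>0$, the remaining case in which the normal bundle is neither complex nor totally real, so that $a\neq 0$ and $b_1^2+b_2^2\neq 0$.

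To treat that case I would carry the computation preceding the statement to its end. The elimination argument built from the constancy of $\tr S_\eta^2$ in~\eqref{eq:trSeta}, the expression~\eqref{eq:mu} for $\mu^2$, and the derivative formulas~\eqref{eq:derivatives} and~\eqref{eq:Uimu} forces $1-3a^2$ to be constant; Lemma~\ref{lemma:constant} then makes $a$, $b_1$, $b_2$ and $\mu$ all constant. Since $b_1$ and $b_2$ are now constant, $U_1b_1+U_2b_2=0$, and evaluating this sum through~\eqref{eq:derivatives} gives $ab_1b_2=0$; as $a\neq 0$ I may, after reordering, take $b_2=0$ and $b_1\neq 0$. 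Substituting back into~\eqref{eq:derivatives} yields $\mu=0$, and then~\eqref{eq:Uimu} forces $\lambda_1=3\lambda_2\neq 0$.

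The decisive step is the final evaluation of $U_1b_2$ through~\eqref{eq:derivatives}, which collapses to $0=a(c+12\lambda_2^2)/(4\lambda_2)$. Since $a\neq 0$ and $\lambda_2\neq 0$, this forces $c=-12\lambda_2^2<0$, impossible for $c>0$. Hence on $\C P^2$ the normal bundle of a Terng-isoparametric surface must be complex or totally real, and $M$ is isoparametric by the cases already settled. I expect the main obstacle to be the polynomial elimination establishing the constancy of $1-3a^2$: one must clear $\mu$ and the $b_i$ from the differentiated relations to reach a single identity in $a$ with constant coefficients, after which the sign of $c$ closes the argument at once.
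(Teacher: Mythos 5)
Your proposal is correct and follows essentially the same route as the paper: the same trichotomy on the normal bundle (complex, totally real, neither), the same elimination using \eqref{eq:trSeta}, \eqref{eq:mu}, \eqref{eq:derivatives} and \eqref{eq:Uimu} to force $1-3a^2$ constant and then Lemma~\ref{lemma:constant}, and the same terminal identity $0=a(c+12\lambda_2^2)/(4\lambda_2)$ giving the contradiction for $c>0$. There is nothing to add.
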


Otherwise, if $c<0$ we have $\lambda_2=\pm\sqrt{-3c}/6$. By changing the orientation if necessary, we may assume $\lambda_2>0$. Finally,~\eqref{eq:derivatives} yields $0=U_2a=cb_1(9b_1^2-8)/(4\sqrt{-3c})$. Altogether we have obtained
\begin{align*}
S_\xi &{}=\begin{pmatrix}
\frac{\sqrt{-3c}}{2} & 0\\
0   &   \frac{\sqrt{-3c}}{6}
\end{pmatrix},&
S_\eta &{} =\begin{pmatrix}
 0 & \frac{\sqrt{-3c}}{6}\\
 \frac{\sqrt{-3c}}{6} & 0
\end{pmatrix},&
a&{}=\frac{1}{3},
&b_1 &{}=\frac{2\sqrt{2}}{3},
&b_2 &{}=0.
\end{align*}

Finally, it follows from~\cite[Theorem~5.1(vi)]{CT00} that $M$ is an open part of a Chen's surface, as we wanted to show.


\end{document}